\newcommand {\id}   {\ensuremath{\text{\rm id}}}
\newcommand{\Aut}{\ensuremath{\operatorname{Aut}}}
\newtheorem{theorem}{Theorem}
\newtheorem{lemma}[theorem]{Lemma}
\newtheorem{corollary}[theorem]{Corollary}
\newtheorem{conjecture}[theorem]{Conjecture}
\newtheorem{problem}[theorem]{Problem}
\begin{document}

\title
{Distinguishing finite and infinite trees of arbitrary cardinality\\
\bigskip
}

\author[1,2]{Wilfried Imrich}
\author[2]{Rafa{\l}  Kalinowski}
\author[3]{Florian Lehner}
\author[2]{Monika Pil\'sniak}
\author[2]{Marcin Stawiski}
\affil[1]{\normalsize Montanuniversit\"at Leoben, A-8700 Leoben, Austria}
\affil[2]{\normalsize AGH University of Krakow, 30-059 Krakow, Poland}
\affil[3]{\normalsize University of Auckland, Auckland, New Zealand}

\date{\today}
\maketitle
\begin{abstract}
Let $G$ be a finite or infinite graph and $m(G)$  the minimum number of vertices moved by the non-identity automorphisms of $G$.   We are interested in bounds  on the supremum  $\Delta(G)$ of the degrees of the vertices of $G$ that assure the existence of vertex colorings of $G$ with two colors  that are preserved only by the identity automorphism, and, in particular, in the number $a(G)$ of such colorings that are mutually inequivalent.

For trees $T$ with finite
 $m(T)$ we obtain the bound $\Delta(T)\leq2^{m(T)/2}$ for the existence of such a coloring, and show that $a(T)= 2^{|T|}$ if $T$ is infinite. Similarly,  we prove that $a(G) = 2^{|G|}$ for all  tree-like graphs $G$ with $\Delta(G)\le 2^{\aleph_0}$.

For rayless or one-ended trees $T$ with arbitrarily large infinite  $m(T)$,  we prove directly that $a(T)= 2^{|T|}$ if  $\Delta(T)\le 2^{m(T)}$.

\end{abstract}

\noindent
{Keywords: Automorphisms, finite and infinite trees, distinguishing and asymmetrizing number.\\
Math. Subj. Class.: {05C05, 05C15, 05C25, 05C63.} }

\section{Introduction}\label{sec:intro}
Given a graph $G$, a set $S \subseteq V(G)$ is  \emph{distinguishing} if the identity is the only element of $\Aut(G)$ that fixes $S$ setwise. A graph with a distinguishing set is  called \emph{$2$-distinguishable}, and the number $a(G)$ of inequivalent distinguishing sets of $G$ is its \emph{asymmetrizing number}.
We investigate the dependence of $a(G)$ on bounds on the supremum  $\Delta(G)$ of the degrees of the vertices of $G$, and in terms of the \emph{motion} $m(G)$ of $G$, which is  the minimum number of vertices moved by the non-identity automorphisms of $G$.

Our main results are Theorems \ref{thm:fm} and \ref{thm:aleph}. In Theorem \ref{thm:fm} we show that  a tree $T$ with finite motion $m(T)$ and  $\Delta(T) \le 2^{m/2}$  is 2-distinguishable, and that its asymmetrizing number $a(T)$ is $2^{|T|}$ if $T$ is infinite. 

In Theorem \ref{thm:aleph} we consider tree-like graphs $G$,  and show that $a(G)= 2^{|G|}$ if $\Delta(G)\le 2^{\aleph_0}$.
For its proof  we invoke Theorem \ref{thm:main}, which is a consequence of a theorem of Polat {\cite[Theorem 3.3]{po-1992}. The latter theorem is the main result in a series of three papers by Polat and Sabidussi \cite{posa-1991}, and Polat \cite{po-1991,po-1992}.

Since the proof of {\cite[Theorem 3.3]{po-1992} is somewhat difficult to read, it would be good to have simpler proofs both for this theorem, and for Theorem \ref{thm:main}. In the last section we provide such a proof for the special case of Theorem \ref{thm:main} when $T$ is rayless or one-ended.  The proof uses properties of the  Schmidt rank of rayless graphs.

A curious minor result is Lemma \ref{le:cameron}, which establishes a relationship between the order of a connected graph $G$, the size of $\Aut(G)$, and $a(G)$. In Sections \ref{sec:double} and \ref{sec:treelike} we  also pose  a conjecture and a problem.

Papers on 2-distinguishability rarely explicitly use the number of distinguishing sets, although it can help in proving 2-distinguishability. An exception is Babai's groundbraking 1977 paper \cite{ba-1977}, in which he showed that each tree $T$,  all vertices of which have the same degree $\alpha\ge 2$ of arbitrarily large cardinality, is ``asymmetrizable by $2^{|T|}$ inequivalent asymmetric 2-colorings". In the current  terminology,  this means that  $T$ is 2-distinguishable by $2^{|T|}$ inequivalent distinguishing sets.

Later Babai's result was reproved and generalised  by entirely different means by Polat and Sabidussi \cite{posa-1991}, and Polat \cite{po-1991,po-1992}. They also introduced the notation $a(T)$ for the number of inequivalent distinguishing sets.

Apart from the concept of motion and the asymmetrizing number,  our terminology is  that of Albertson and Collins \cite{ac-1996}. They called a graph \emph{$d$-distinguishable} if there is a vertex coloring with $d$ colors,  such that no non-identity  automorphism preserves the coloring. Hence, when a graph is $2$-distinguishable, then the sets of either color are distinguishing.

 We   focus on the connection between 2-distinguishability  and the  motion of a graph. Such a relationship was already noted by Cameron, Neumann  and Saxl  \cite{canesa-1984} in 1984, where they studied the interdependence between the existence of distinguishing sets of permutation groups and their degrees, where the degree of a permutation group corresponds to our concept of motion.  Another  result of this type is from
 Russel and Sundaram \cite{rusu-1998}, who showed in 1998 that  $G$ is $d$-distinguishable if $d^{m(G)/2} > |\Aut(G)|$.
For further results on motion and distinguishabiliy we refer to the paper of  Conder and Tucker \cite{cotu-2011}, which also contains an extensive bibliography.

The most substantial contribution linking motion to asymmetrization  is due to Babai \cite{ba-2021}. In 2022 he proved that each connected, locally finite graph with infinite motion is 2-distinguishable, thereby verifying the Infinite Motion Conjecture of Tucker \cite{tu-2011}.

\section{Preliminaries}\label{sec:prelim}

We call a graph $G$  2-distinguishable if it has a distinguishing set of vertices, that is, a set $S \subseteq V(G)$ that is preserved only by the identity automorphism. If $S$ is such a set, then its complement $V(G)\setminus S$ is also distinguishing.
The definition allows that $S$ or $V(G)\setminus S$ are empty. In this case the identity is the only automorphism of $G$, and we call $G$ \emph{asymmetric}.  Note that pincking a subset $S$ of $V(G)$ is equivalent to coloring the vertices in $S$ and in $V(G)\setminus S$ with two distinct colors. If $S$ is a distinguishing set, then we call the resulting coloring a \emph{distinguishing coloring.}

Subsets $S$ of $G$ and $S'$ of $G'$ are called \emph{equivalent} if there exists an isomorphism $\varphi$ from $G$ to $G'$ such that $\varphi(S) = S'$. Recall that we defined the {asymmetrizing number  $a(G)$ of $G$, as the number of pairwise inequivalent distinguishing sets of $G$. Clearly $a(G) \leq 2^{|G|}$ for all graphs,  $a(G) = 2^{|G|}$ for asymmetric graphs, and $a(G)=0$ when $G$ is not 2-distinguishable. For example, $a(K_3)= 0$, 
  $a(K_2)= 1$, and $a(K_1)=2$; the inequivalent distinguishing sets of $K_1$ being  $V(K_1)$ and the empty set.

We shall write $(T,w)$ for a tree $T$ with root $w \in V(T)$, {and  $\Aut(T,w)$ for  the subgroup of $\Aut(T)$ that fixes $w$. We say a subset $S \subseteq V(T)$ \emph{distinguishes}  $(T,w)$ if the identity is the only element of $\Aut(T,w)$ that fixes $S$ setwise. Two subsets $S$ and $S'$ are called \emph{equivalent with respect to $\Aut(T,w)$}, if there is an element of $\Aut(T,w)$ which maps $S$ to $S'$,  and we define $a(T,w)$ as the number of inequivalent distinguishing sets of $(T,w)$. Note that  $a(T) \le a(T,w)$, because $\Aut(T) \supseteq \Aut(T,w)$.

For vertices $x$, $y$ of a rooted tree $(T,w)$,  $x$ is  a \emph{descendant} of $y$, in symbols, $x \ge y$,  if $y$ lies on the unique path from $w$ to $x$. Vertex $y$ is the \emph{parent} of $x$ if $y\le x$ and $yx$ is an edge. In that case,  $x$ is a \emph{child} of $y$. We say $x$ and $x'$ are \emph{siblings} if they have the same parent, and  that $x$ and $x'$ are \emph{twins} if they are siblings, and if there is an automorphism that moves $x$ to $x'$ (and fixes their parent). We call the set of twins of $x$ the \emph{similarity class} of $x$, denote it by $\bar{x}$, and call its size 
the multiplicity $\mu(x)$ of $x$.  We always have $\mu(x) \geq 1$, because $x$ is a twin of itself. By $R_w$ we denote a set of representatives for the similarity classes of the children of the root $w$.

For a vertex  $x$ of $(T,w)$ with parent $y$,  let $T^x$ denote the component of $T - xy$ which contains $x$. For $x=w$, we set $T^w=T$. We consider $T^x$ as a rooted tree with root $x$ and write $a(T,w;x)$ for the number of inequivalent {distinguishing sets} of $(T^x,x)$. {If the rooted tree $(T,w)$ is clear from the context, we write $a(x)$ instead of $a(T,w; x)$.} Let $y$ be the parent of $x$  and $x'$.  Then $x$ and $x'$ are twins if and only if $T^x$ and $T^{x'}$ are isomorphic. By  slight abuse of notation we also call  $\mu(x)$ the multiplicity $\mu(T^x)$ of $T^x$.

We also consider a slightly generalized version of $T^x$. Let $F$ be a set of edges, say the edge set of a one-sided infinite path, or some other subgraph of $T$. Then $T^x_F$ denotes the component of $T-F$ containing $x$. We write $T^x$ for short if the set $F$ follows  from the context. Such a $T^x$ is a \emph{lobe of $T$} with respect to $F$.

Recall that $\Delta(G)$ is the least upper bound on the degrees of the vertices of a graph $G$. If $G$ is finite, then $\Delta(G)$ is the maximum degree, and  if $\Delta(G)$ is infinite and connected, then $\Delta(G) = |G|$.

We shall frequently refer to the following  theorem of Polat and Sabidussi \cite{posa-1991}. 

\begin{theorem}[{\cite[Theorem 2.3]{posa-1991}}] \label{thm:basic} The asymmetrizing number of a rooted tree $(T, w)$ is
\begin{equation}\label{eq:1}
a(T,w) = 2\prod_{{x\in R_w}}{a(x) \choose \mu(x)},
\end{equation}
where ${a \choose \mu}$ denotes the usual binomial coefficient if  $a$ and $\mu$ are finite. If $a$ is infinite, then ${a \choose \mu}=a^\mu$  if $\mu\le a$, and ${a \choose \mu}=0$ if $a < \mu$.
\end{theorem}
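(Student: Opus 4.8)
\emph{Proof sketch.} The plan is to count the orbits of $\Aut(T,w)$ on the distinguishing sets of $(T,w)$ by first stripping off the root and then handling the children of $w$ one similarity class at a time. Since $w$ is a fixed point of every element of $\Aut(T,w)$, whether or not $w$ lies in a set $S$ influences neither whether $S$ is distinguishing nor the equivalence class of $S$; hence every orbit of distinguishing subsets of $V(T)\setminus\{w\}$ splits into exactly two orbits of distinguishing subsets of $V(T)$, which accounts for the factor $2$ in \eqref{eq:1}. It therefore suffices to count the $\Aut(T,w)$-orbits of distinguishing subsets of $V(T)\setminus\{w\}=\bigsqcup_{c}V(T^c)$, the union ranging over the children $c$ of $w$.

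First I would record the structure of $\Aut(T,w)$. Any $\sigma\in\Aut(T,w)$ permutes the children of $w$ and restricts on each lobe $T^c$ to an isomorphism onto $T^{\sigma(c)}$; in particular $\sigma$ preserves similarity classes, and on a similarity class $\bar x$ of size $\mu(x)$ it is given by a permutation of the $\mu(x)$ mutually isomorphic lobes together with an isomorphism on each of them. After fixing identifications $T^{x'}\cong T^x$ for $x'\in\bar x$, this says that
\[
\Aut(T,w)=\prod_{x\in R_w}\bigl(\Aut(T^x,x)\wr\Sym(\mu(x))\bigr),
\]
acting on $V(T)\setminus\{w\}$ with one block of coordinates per similarity class. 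Since $\sigma$ is trivial if and only if each of its blockwise components is, a subset $S\subseteq V(T)\setminus\{w\}$ is distinguishing if and only if its restriction to every block is distinguishing for the corresponding factor $\Aut(T^x,x)\wr\Sym(\mu(x))$, and two such subsets are equivalent if and only if their restrictions agree block by block up to the actions of these factors. Consequently the number of orbits we seek equals the product over $x\in R_w$ of the numbers of orbits of distinguishing subsets within one block.

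The heart of the argument is the analysis of a single block. Identify the block of a similarity class $\bar x$ with $\bigsqcup_{i=1}^{\mu(x)}V(T^x)$, so that a subset of it becomes a tuple $(U_1,\dots,U_{\mu(x)})$ with $U_i\subseteq V(T^x)$. A short stabiliser computation shows that this tuple has trivial stabiliser in $\Aut(T^x,x)\wr\Sym(\mu(x))$ exactly when each $U_i$ is a distinguishing set of $(T^x,x)$ and the $U_i$ are pairwise inequivalent with respect to $\Aut(T^x,x)$: two equivalent coordinates yield a lobe-swapping automorphism fixing $S$, a non-distinguishing $U_i$ yields a nontrivial automorphism supported on a single lobe, and conversely if all the $U_i$ are distinguishing and pairwise inequivalent, then any element fixing the tuple must fix each lobe and act as the identity on it. Moreover two admissible tuples lie in the same orbit precisely when they determine the same $\mu(x)$-element set of equivalence classes of distinguishing sets of $(T^x,x)$. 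As $(T^x,x)$ has $a(x)$ such equivalence classes in all, the number of orbits inside the block is the number of $\mu(x)$-element subsets of an $a(x)$-element set, namely $\binom{a(x)}{\mu(x)}$ --- which is $0$ when $\mu(x)>a(x)$ (in particular when $a(x)=0$) and equals $a(x)^{\mu(x)}$ when $a(x)$ is infinite, in agreement with the conventions in the statement. Multiplying over the similarity classes, which are independent since no automorphism mixes non-isomorphic lobes, and reinstating the factor $2$ for the root yields \eqref{eq:1}; the empty product corresponds to $T=K_1$ with $a(T,w)=2$.

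The step I expect to be the main obstacle is the bookkeeping in the previous paragraph: identifying precisely when a blockwise restriction is distinguishing --- the delicate point being that the $\mu(x)$ restrictions must be \emph{pairwise} inequivalent, not merely individually distinguishing --- and then matching orbits of admissible configurations with $\mu(x)$-element subsets so that the binomial coefficient, with its infinite-cardinal conventions, appears. One should also verify that the wreath-product description of $\Aut(T,w)$ is valid for infinite trees of arbitrary cardinality, and that the cardinal identity $\binom{a}{\mu}=a^{\mu}$ for infinite $a\ge\mu$ is in force.
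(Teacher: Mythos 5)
Your proposal is correct and follows essentially the same route as the paper's own proof: decompose the distinguishing subsets of $V(T)\setminus\{w\}$ by similarity classes of children, count $\binom{a(x)}{\mu(x)}$ orbits per class, take the product, and double for the two choices at the fixed root $w$. The only difference is that you spell out, via the wreath-product structure of $\Aut(T,w)$ and the stabiliser computation showing the restrictions must be pairwise inequivalent, the step the paper dismisses with ``clearly''.
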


\begin{proof}
Let $x\in R_w$, where $R_w$ is a set of representatives for the similarity classes $\bar{y}$ of the children $y$ of the root $w$,  and $v$ be a twin of $x$. By
definition,  the number of  inequivalent distinguishing sets of the
rooted tree $T^v$ is $a(x)$, and the number of  inequivalent distinguishing sets of  $\bigcup_{v \in \bar{x}}T^v$ in $(T,w)$ clearly is   ${a(x) \choose \mu(x)}$.

Then $\prod_{{x\in R_w}}{a(x) \choose \mu(x)}$ is the number of  inequivalent  distinguishing sets of $X = \bigcup_{v\in N(w)}T^v$ in $(T,w)$, where $N(w)$ is the set of all neighbors of $w$.
The observation that for each $S\subseteq V(X)$ that distinguishes  $X$, both $S$ and $\{w\}\cup S$ distinguish $(T,w)$, completes the proof.
\end{proof}

An immediate consequence of Theorem~\ref{thm:basic} is the following corollary.

\begin{corollary}[Polat, Sabidussi {\cite[page 282]{posa-1991}}]\label{cor:ns}  A rooted tree $(T,w)$ is $2$-distinguishable if and only if $\mu(x)\le a(x)$ for all $x\in R_w$.
\end{corollary}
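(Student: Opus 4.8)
The plan is to read the corollary directly off the product formula \eqref{eq:1} of Theorem~\ref{thm:basic}. By definition $(T,w)$ is $2$-distinguishable precisely when it admits at least one distinguishing set, i.e.\ when $a(T,w) \ge 1$, equivalently $a(T,w) \neq 0$. So everything reduces to determining when the right-hand side of \eqref{eq:1} vanishes.

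First I would recall the conventions governing the binomial coefficients in \eqref{eq:1}. If $a$ and $\mu$ are finite, then (since $\mu(x) \ge 1$ always) we have ${a \choose \mu} = 0$ if and only if $\mu > a$; if $a$ is infinite, the theorem stipulates ${a \choose \mu} = 0$ if and only if $a < \mu$. In either case ${a(x) \choose \mu(x)} \neq 0$ exactly when $\mu(x) \le a(x)$.

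Next I would use the elementary fact that a product of cardinal numbers — the factors may be finite or infinite, and the index set $R_w$ may itself be infinite — is nonzero if and only if every factor is nonzero. Since the leading factor $2$ in \eqref{eq:1} is nonzero, $a(T,w) = 2\prod_{x\in R_w}{a(x) \choose \mu(x)}$ is nonzero exactly when ${a(x) \choose \mu(x)} \neq 0$ for every $x \in R_w$, which by the previous paragraph is precisely the condition $\mu(x) \le a(x)$ for all $x \in R_w$. Combining this with the reduction in the first paragraph yields the stated equivalence.

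There is no serious obstacle here; the only point demanding a little care is the bookkeeping for binomial coefficients with an infinite upper argument and for possibly infinite products of cardinals, and both are already settled by the conventions fixed in the statement of Theorem~\ref{thm:basic}.
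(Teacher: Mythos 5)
Your proposal is correct and is essentially the paper's own argument: the paper states the corollary as an immediate consequence of Theorem~\ref{thm:basic}, and your reading of when the product formula \eqref{eq:1} vanishes (a factor ${a(x)\choose\mu(x)}$ is zero exactly when $\mu(x)>a(x)$, and a product of cardinals vanishes exactly when some factor does) is precisely that deduction, spelled out.
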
}

We end this section with a  lemma that  holds for all  trees.

\begin{lemma}[Polat and Sabidussi \cite{posa-1991}, Proposition 2.5 (ii)]\label{le:uv} Let  $T$ be a tree with a central edge $uv$,  and $T^u$ be the component of $T- uv$ that contains $u$.  If each non-identity automorphism of $T$ swaps $u$ and $v$, then
\begin{equation}\label{eq:2}a(T) = {{a(T^u)}\choose{2}}.
\end{equation}
\end{lemma}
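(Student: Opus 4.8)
The plan is to reduce the statement to an application of Theorem~\ref{thm:basic} by choosing the right root. Under the hypothesis, every non-identity automorphism of $T$ swaps $u$ and $v$, so in particular $\Aut(T)$ fixes the edge $uv$ setwise and fixes the midpoint of $uv$; hence there is no automorphism fixing $u$ (or $v$) other than the identity, and the two "half-trees" $T^u$ and $T^v$ are isomorphic. Identifying $T^v$ with $T^u$ via such an isomorphism, one sees that $T$ is obtained from two disjoint copies of the rooted tree $(T^u,u)$ by joining their roots with an edge. The idea is to compare distinguishing sets of $T$ with distinguishing sets of the rooted forest consisting of these two copies.

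First I would make precise the bijection between distinguishing sets. A set $S\subseteq V(T)$ is distinguishing for $T$ exactly when no non-identity automorphism fixes $S$ setwise; since the only non-identity automorphisms are those swapping the two copies (and any such automorphism is determined by an automorphism of $(T^u,u)$ composed with the swap), $S$ is distinguishing iff the restrictions $S\cap V(T^u)$ and $S\cap V(T^v)$ are "inequivalent" in the sense that no isomorphism $(T^u,u)\to(T^v,v)$ carries one to the other. This is precisely the condition counted by $\binom{a(T^u)}{2}$: we are choosing an unordered pair of distinct elements from the $a(T^u)$ equivalence classes of distinguishing sets of $(T^u,u)$ — one class for each copy — and distinctness is forced because equal classes would give an automorphism of $T$ fixing $S$. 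I would also note that it is enough to use distinguishing sets of the rooted trees $(T^u,u)$ and $(T^v,v)$ rather than arbitrary subsets, since a set whose restriction to one half is not even distinguishing for that rooted half-tree is automatically not distinguishing for $T$ (extend a witnessing automorphism of the half-tree by the identity on the other half).

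Concretely, I would apply Theorem~\ref{thm:basic} with a carefully chosen root, or argue directly. One clean route: subdivide the edge $uv$ with a new vertex $w$ — but since we must stay within $T$, instead argue that $a(T)$ equals the number of inequivalent distinguishing sets of the disjoint union $(T^u,u)\sqcup(T^v,v)$ under the group generated by $\Aut(T^u,u)$, $\Aut(T^v,v)$, and the swap; this group acts exactly as the wreath product $\Aut(T^u,u)\wr S_2$, and the count of inequivalent distinguishing subsets of two isomorphic copies under such an action is $\binom{a(T^u)}{2}$ by the same elementary bookkeeping that appears in the proof of Theorem~\ref{thm:basic} for the factor $\binom{a(x)}{\mu(x)}$ with $\mu=2$. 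The conventions for infinite cardinals ($\binom{a}{2}=a^2$ if $a\ge 2$, and $0$ if $a<2$, i.e.\ $a\le 1$) match: if $a(T^u)\le 1$ then $T^u$ is essentially rigid and the swap cannot be broken, so $a(T)=0$, consistent with Corollary~\ref{cor:ns} applied to $(T,$ midpoint$)$ where $\mu=2>a(T^u)$.

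The main obstacle is the careful verification that $S\subseteq V(T)$ distinguishes $T$ if and only if the ordered pair of classes $(\,[S\cap V(T^u)],\,[S\cap V(T^v)]\,)$ consists of two well-defined, distinct classes of distinguishing sets of the two half-trees — in particular ruling out the case where $S\cap V(T^u)$ distinguishes $(T^u,u)$ but $S$ is nonetheless fixed by some automorphism of $T$ that does \emph{not} restrict to automorphisms of the halves. Here the hypothesis that \emph{every} non-identity automorphism swaps $u$ and $v$ is exactly what we need: any such automorphism maps $V(T^u)$ bijectively to $V(T^v)$, so it restricts to an isomorphism $(T^u,u)\to(T^v,v)$ (and vice versa), and thus fixing $S$ forces the two classes to coincide. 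Once this equivalence is nailed down, the count $\binom{a(T^u)}{2}$ — unordered because swapping $u$ and $v$ is an automorphism of $T$ — follows immediately, and the infinite-cardinal cases are handled by the stated conventions.
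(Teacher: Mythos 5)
Your proposal is correct and takes essentially the same approach as the paper: the paper's own proof is a one-line reduction --- subdivide $uv$ by a new vertex $w$, apply Theorem~\ref{thm:basic} to the rooted tree $(T,w)$ (where the two halves form one similarity class with $\mu=2$, yielding the factor ${a(T^u) \choose 2}$), and then delete $w$ --- and your direct wreath-product bookkeeping is precisely an unpacking of that application. The only cosmetic difference is that you avoid the subdivision trick and verify the pair-counting by hand, which is fine but not a genuinely different route.
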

\begin{proof} Subdivide the edge $uv$ by vertex $w$,  apply Theorem \ref{thm:basic} to the new tree $(T,w)$, and remove $w$ again.
\end{proof}

\section{Trees with finite motion}

By  K\H{o}nig's Infinity Lemma each infinite  tree,  all of whose vertices have finite  degrees, has a one-sided infinite path. Such paths are called \emph{rays}. An immediate consequence of the lemma  is that infinite rayless trees have vertices of infinite degrees. Clearly, finite trees are also  rayless. All rayless trees, finite or infinite, have a center consisting of a single vertex or a pair of adjacent vertices that is invariant under all automorphisms of $T$. For finite trees this is well known, for infinite trees this was shown by Polat and Sabidussi  in \cite{posa-1994}, but also immediately follows from a result of Schmidt in \cite{schm-1983}. Note that the center is uniquely defined also for asymmetric   trees.

Graphs that contain rays, but no double rays, are called \emph{one-ended}. 
Each one ended tree has at least one vertex of degree 1, otherwise it would contain a double ray. It is also easy to see that each vertex of a tree with rays, is the origin of a ray. Hence, each one-ended tree contains a ray whose origin has degree 1. The subrays of a ray are called \emph{tails}, and any two of them differ at most by a finite initial segment.

For one-ended trees we need the following result of Carmesin, Lehner and M{\"o}ller \cite{carle-2019}.

\begin{lemma}\label{le:fix}{\rm (\cite{carle-2019}, Lemma 4.1)} Let $T$ be a one-ended tree and $R$ a ray of $T$. Then each automorphism of $T$ fixes some  tail of $R$ pointwise.
\end{lemma}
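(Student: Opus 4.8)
The plan is to exploit the fact that a one-ended tree has, up to a common tail, essentially only one ray. First I would record the elementary observation that, since $T$ contains no double ray, any two rays of $T$ share a common tail: if two rays had no common tail, then splicing their two divergent tails together through the finite path joining them (or through their last common vertex) would produce a double ray. So, writing $R=v_0v_1v_2\cdots$ and fixing $\varphi\in\Aut(T)$, the ray $\varphi(R)=\varphi(v_0)\varphi(v_1)\cdots$ shares a tail with $R$, and hence there are indices $i,j\ge 0$ with $\varphi(v_{i+k})=v_{j+k}$ for every $k\ge 0$. If $i=j$ we are done, since then $\varphi$ fixes the tail $v_jv_{j+1}\cdots$ of $R$ pointwise.

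The substance of the argument is to exclude $i\ne j$. Replacing $\varphi$ by $\varphi^{-1}$ if necessary, we may assume that some $\psi\in\{\varphi,\varphi^{-1}\}$ satisfies $\psi(v_m)=v_{m+d}$ for all $m\ge N$, with $d\ge 1$ and $N\ge 0$; thus $\psi$ carries the tail $R^{+}:=v_Nv_{N+1}v_{N+2}\cdots$ onto its proper tail $v_{N+d}v_{N+d+1}\cdots$. Let $P$ be the path of $T$ from $\psi^{-1}(v_N)$ to $v_N$. Applying $\psi$ to $P$ and using uniqueness of paths in a tree shows that $P$ has length $d$, that $\psi^{-1}(v_N)\ne v_N$, and that $P$ meets $R^{+}$ only in $v_N$; hence $S:=P\cup R^{+}$ is a ray with $\psi(S)=R^{+}\subsetneq S$. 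Applying $\psi^{-1}$ repeatedly then produces a strictly increasing chain of rays $S\subsetneq\psi^{-1}(S)\subsetneq\psi^{-2}(S)\subsetneq\cdots$, and I would set $L:=\bigcup_{n\ge 0}\psi^{-n}(S)$.

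Finally I would show that $L$ contains a double ray, contradicting one-endedness. Each $\psi^{-(n+1)}(S)$ is a ray properly containing the ray $\psi^{-n}(S)$, so in a tree it must extend $\psi^{-n}(S)$ precisely at the origin of $\psi^{-n}(S)$. Consequently every vertex of $L$ eventually becomes internal, so $L$ is a connected infinite subgraph of $T$ with no leaf, i.e.\ of minimum degree at least $2$; such a tree contains a double ray. This contradiction forces $i=j$, which finishes the proof.

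I expect the main obstacle to be the bookkeeping in the last two steps: verifying carefully that $S=P\cup R^{+}$ is genuinely a ray (i.e.\ $P\cap R^{+}=\{v_N\}$, which comes from applying $\psi$ to the vertex of $P$ adjacent to $v_N$), and that passing from $\psi^{-n}(S)$ to $\psi^{-(n+1)}(S)$ only ever grafts new material onto the current origin, so that $L$ is leaf-free. The two auxiliary facts invoked — that absence of double rays forces any two rays to share a tail, and that a tree of minimum degree $\ge 2$ contains a double ray — are elementary and can each be disposed of in a sentence.
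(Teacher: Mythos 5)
Your proof is correct. Note, however, that the paper itself does not prove this lemma; it is imported verbatim from Carmesin--Lehner--M\"oller \cite{carle-2019}, so there is no in-paper argument to compare against. What you have written is a self-contained replacement, and it is essentially the standard ``no hyperbolic elements in a one-ended tree'' argument: since all rays of a one-ended tree are end-equivalent, $\varphi(R)$ and $R$ share a tail, giving $\varphi(v_{i+k})=v_{j+k}$; and a nontrivial shift $d=|i-j|\ge 1$ would make $\varphi$ (or $\varphi^{-1}$) a translation, whose iterated preimages of the ray $S=P\cup R^{+}$ form a strictly increasing chain of rays, each a tail of the next, whose union is a double ray --- contradicting one-endedness. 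All the individual steps check out: $P\cap R^{+}=\{v_N\}$ follows exactly as you say from $\psi(v_m)=v_{m+d}$ for $m\ge N$; a ray properly contained in a ray of a tree is necessarily a proper tail of it (a connected infinite subgraph of a path is a tail), so each $\psi^{-(n+1)}(S)$ does graft exactly $d$ new vertices onto the origin of $\psi^{-n}(S)$, and $L$ is in fact literally a double ray, so you do not even need the auxiliary fact about trees of minimum degree $2$. The one place you compress slightly is the opening claim that any two rays without a common tail yield a double ray: your one-sentence splicing argument covers the case of finitely many common vertices, while the case of infinitely many common vertices needs the (equally standard) observation that the index correspondence along the shared vertices is monotone and hence the rays already share a tail. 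That is a routine elementary fact and does not constitute a gap. Compared with the route in \cite{carle-2019}, which can be run through the elliptic/inversion/hyperbolic trichotomy for tree automorphisms, your argument re-derives the only case that matters directly, which keeps the proof elementary and self-contained.
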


Recall that motion was not defined for asymmetric graphs $G$. For this case, we extend the definition by the convention that $m(G) > \alpha$ for any asymmetric graph, and any cardinal $\alpha$. In particular, a graph $G$ with fewer than $\alpha$ vertices has,  by this convention, motion $m(G)\geq \alpha$ if and only if it is asymmetric.

 Recall that the motion $m(T, w)$ of $(T,w)$  cannot be smaller than the motion $m$  of $T$, because  $\Aut(T,w) \subseteq \Aut(T)$. This means, if the degrees of $T$ are bounded by $2^m$, then they are also bounded by $2^{m(T,w)}$.

\begin{lemma}\label{le:help} Let $(T, w)$ be a finite rooted tree, where $T$ has  motion $m$, and where $\Delta(T)\le 2^{m/2}$. Then $(T,w)$ is $2$-distinguishable. Furthermore, if $\deg(w)< 2^{m/2}$, then $a(T,w) \ge 2\cdot 2^{m/2}$, and if $(T,w)$ is asymmetric, then $a(T,w)=2^{|T|}$ .
\end{lemma}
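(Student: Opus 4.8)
The plan is to induct on $|T|$, using Theorem \ref{thm:basic} to reduce the statement for $(T,w)$ to statements about the lobes $(T^x,x)$ hanging from the children $x$ of $w$. The base case $|T|=1$ is immediate: $a(K_1,w)=2 \geq 2\cdot 2^{m/2}$ fails only if $m\geq 2$, but a single vertex is asymmetric, so by the convention $m>\alpha$ for all $\alpha$, and indeed $a(T,w)=2=2^{|T|}$, covering also the asymmetric clause. For the inductive step, fix a child $x\in R_w$ and look at $(T^x,x)$. Since $\Delta(T^x)\le\Delta(T)\le 2^{m/2}$ and, crucially, $m(T^x)\geq m(T)=m$ (a non-identity automorphism of $T^x$ extends to a non-identity automorphism of $T$ fixing $w$ and all other lobes, moving the same vertices — here I need the standard fact that motion of a rooted subtree is at least the motion of the whole tree, analogous to the remark preceding the lemma), the induction hypothesis applies to $(T^x,x)$: it is $2$-distinguishable, i.e. $a(x)\geq 1$, and in fact $a(x)\geq 2$ unless $T^x=K_1$.

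The heart of the argument is to control $\mu(x)$, the number of twins of $x$, and show $\mu(x)\le a(x)$ so that $\binom{a(x)}{\mu(x)}\geq 1$; this gives $2$-distinguishability of $(T,w)$ via Corollary \ref{cor:ns}, and for the quantitative bound I want to show that the product $\prod_{x\in R_w}\binom{a(x)}{\mu(x)}$ is at least $2^{m/2}$ when $\deg(w)<2^{m/2}$. I would split on cases. If some child $x$ has $\mu(x)=1$ and $T^x\ne K_1$, then $a(x)\geq 2$ and — using the stronger form of the induction hypothesis when $\deg(x)<2^{m/2}$, or a separate argument when $\deg(x)=2^{m/2}$ using that $T^x$ then has a vertex of large degree — one expects $a(x)$ itself to be large, $\geq 2^{m/2}$, giving a factor $\binom{a(x)}{1}=a(x)\geq 2^{m/2}$ and hence $a(T,w)=2\prod\binom{a(x)}{\mu(x)}\geq 2\cdot 2^{m/2}$. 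The remaining case is that every child lobe is $K_1$ or occurs with multiplicity $\mu(x)\geq 2$. If $\mu(x)=\mu\geq 2$ copies of $K_1$ occur, then $w$ has $\mu$ leaf-children which are mutually interchangeable; any automorphism swapping two of them moves exactly $2$ vertices, so $m\le 2$, i.e. $m/2\le 1$; then $2^{m/2}\le 2$, while $\binom{a(K_1)}{\mu}=\binom{2}{\mu}$ is $1$ if $\mu=2$ and $0$ if $\mu>2$ — so $\mu=2$, $m=2$, and one checks $a(T,w)\geq 2\cdot 1 = 2 = 2\cdot 2^{m/2}$ with $\deg(w)\geq 2$, consistent with the hypothesis $\deg(w)<2^{m/2}=2$ being vacuous here. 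If instead $\mu\geq 2$ copies of a nontrivial lobe $T^x$ occur, the swap automorphism moves at least $2|T^x|\geq 2\cdot\deg(x)\cdot(\text{something})$ vertices; more usefully, it moves $\mu\cdot|T^x|\geq 2|T^x|$ vertices, and combined with $\Delta\le 2^{m/2}$ and $|T^x|\geq \deg(x)$ one extracts $a(x)\geq \mu$ and then bounds $\binom{a(x)}{\mu}$ from below. Throughout, whenever $\deg(w)<2^{m/2}$ fails, there is nothing to prove for the quantitative clause, and when it holds the number of similarity classes is correspondingly bounded, which is what lets the product not collapse.

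For the asymmetric clause: if $(T,w)$ is asymmetric then $\Aut(T,w)$ is trivial, so \emph{every} subset of $V(T)$ is a distinguishing set, and inequivalence is automatic since the only group element is the identity; hence $a(T,w)=2^{|V(T)|}=2^{|T|}$ directly, with no recursion needed. Equivalently, in formula \eqref{eq:1} asymmetry forces each $\mu(x)=1$ and each $(T^x,x)$ asymmetric, so by induction $a(x)=2^{|T^x|}$, and $2\prod_{x\in R_w}2^{|T^x|}=2^{1+\sum_{x}|T^x|}=2^{|T|}$ since the lobes partition $V(T)\setminus\{w\}$.

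I expect the main obstacle to be the borderline case $\deg(x)=2^{m/2}$ (rather than strict inequality) in the inductive step: the induction hypothesis only guarantees $a(x)\geq 2$, not $a(x)\geq 2\cdot 2^{m/2}$, in that regime, so to push the quantitative bound through one child one must instead argue directly that a vertex of degree exactly $2^{m/2}$ in a finite tree of motion $m$ already forces $a$ of its rooted subtree to be at least $2^{m/2}$ — essentially a counting argument on how the $2^{m/2}$ children (or their lobes) can be bipartitioned without creating a small-motion automorphism. Reconciling this with the multiplicity bookkeeping in Theorem \ref{thm:basic}, so that no case produces $\binom{a(x)}{\mu(x)}=0$, is the delicate part; everything else is bookkeeping with cardinal arithmetic.
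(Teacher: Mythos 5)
Your overall strategy---induction on $|T|$ via Theorem \ref{thm:basic}, with the motion hypothesis forcing any similarity class of size $\mu(x)\ge 2$ to consist of large lobes---is the same as the paper's (which phrases it as a minimal-counterexample argument). But you leave a genuine gap, and you even name it yourself: the ``borderline case'' in which a child $x$ of $w$ has degree $2^{m/2}$ in $T$, where you say the quantitative induction hypothesis does not apply to $(T^x,x)$ and ``a separate argument'' about bipartitioning the children of $x$ would be needed, which you do not supply. That obstacle is illusory, and the whole reason the lemma pairs the weak hypothesis $\Delta(T)\le 2^{m/2}$ with the strict hypothesis $\deg(w)<2^{m/2}$ in the quantitative clause is to make it disappear: in the rooted lobe $(T^x,x)$ the root $x$ has lost its parent, so its degree there is $\deg_T(x)-1\le 2^{m/2}-1<2^{m/2}$. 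Hence the strong form of the induction hypothesis applies to \emph{every} non-asymmetric lobe and yields $a(x)\ge 2\cdot 2^{m/2}$ outright; this is exactly the observation the paper uses (``the degree of $x$ in $T^x$ is less than $2^{m/2}$''). Since you flag this as the main unresolved difficulty rather than closing it, the proposal as written is incomplete.

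A second, smaller soft spot is the case of a similarity class with $\mu(x)\ge 2$ and $T^x$ asymmetric, where the induction hypothesis only gives $a(x)=2^{|T^x|}$ and is useless unless $|T^x|$ is known to be large. The needed input is that swapping two twin lobes is a non-identity automorphism of $T$ moving exactly $2|T^x|$ vertices, so $|T^x|\ge m/2$ and therefore $a(x)=2^{|T^x|}\ge 2^{m/2}\ge\Delta(T)\ge\deg(w)\ge\mu(x)$, whence $\binom{a(x)}{\mu(x)}\ge 1$, and indeed $\binom{a(x)}{\mu(x)}\ge a(x)\ge 2^{m/2}$ whenever $1\le\mu(x)\le a(x)/2$. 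You gesture at ``$2|T^x|$ vertices moved'' but then drift into $|T^x|\ge\deg(x)$ and ``one extracts $a(x)\ge\mu$,'' which is not the relevant estimate. With these two points in place---every lobe root has degree $<2^{m/2}$ in its lobe, and every lobe of multiplicity $\ge 2$ has order $\ge m/2$---the rest of your bookkeeping (the asymmetric clause, the product formula, monotonicity of the binomial coefficient up to $a/2$) does go through and matches the paper's computation.
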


\begin{proof} If $(T,w)$ is asymmetric the lemma is true. Otherwise,  suppose that the lemma is false,  and that $(T,w)$ is a  minimal counterexample. By the minimality of $(T,w)$,  the lemma must be true for all  $T^x$, where $x$ is a child of $w$, and all $T^x$ must be pairwise isomorphic and 2-distinguishable.

 If $\mu(x) =1$, then $(T,w)$ is 2-distinguishable, because $T^x$ is. Hence, $\mu(x)\ge 2$, and  $|T^x| \ge m/2$, otherwise the motion of $(T,w)$ would be smaller than $m$.
  $T^x$ cannot be asymmetric, otherwise  $a(T^x) \ge  2^{m/2}> \mu(x)$, and $(T,w)$ is 2-distinguishable by Theorem~\ref{thm:basic}.

Hence  $T^x$, for which the lemma  holds, is not asymmetric, has motion at least $m$, $\Delta(T^x)\le 2^{m/2}$, and the degree of $x$ in $T^x$ is less than $2^{m/2}$.  We infer that $a(x) = a(T^x) \ge  2\cdot 2^{m/2} >{\rm deg}(w)= \tau(x)$.
If ${\rm deg}(w) = 2^{m/2}$, then
$$a(T, w) = 2{a(x) \choose \tau(x)}\ge 2{2\cdot 2^{m /2} \choose 2^{m /2}}\ge 2,$$
and if ${\tau(x) = \rm deg}(w)  < 2^{m/2}$, then
$$a(T, w) = 2{a(x) \choose \tau(x)}\ge 2{2\cdot 2^{m/2}\choose \tau(x)} \ge 2\cdot 2^{m/2},$$
because $\tau(x) < 2^{m/2}$. This completes the proof.
\end{proof}


\begin{theorem}\label{thm:fm} Let $T$ be a finite or infinite tree with finite motion $m$ and maximum degree $\Delta(T) \le  2^{m/2}$. Then:
\begin{enumerate}
\item [\rm (i)] $T$ is $2$-distinguishable.
\item [\rm (ii)]  $a(T) = 2^{|T|}$ if $T$ is infinite.
\end{enumerate}
\end{theorem}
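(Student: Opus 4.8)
The plan is to reduce the infinite case to the finite one already handled by Lemma~\ref{le:help}, and to prove (i) as a by-product of (ii) together with the finite subcase. First I would dispose of the trivial subcases: if $T$ is finite, part (i) follows immediately from Lemma~\ref{le:help} applied with any choice of root, and part (ii) is vacuous; if $T$ is asymmetric, then every subset of $V(T)$ is a distinguishing set and no two are equivalent, so $a(T)=2^{|T|}$ and we are done. So assume $T$ is infinite and not asymmetric.

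Next I would fix the center of $T$, which by the discussion at the start of this section is a single vertex $w$ or a central edge $uv$, invariant under all automorphisms. Root $T$ at this center (subdividing the central edge by a new vertex $w$ if necessary, as in the proof of Lemma~\ref{le:uv}), so that $\Aut(T)=\Aut(T,w)$ up to the harmless extra involution coming from a possible swap of the two central branches. Since $m$ is finite, every non-identity automorphism moves at most $m$ vertices, hence fixes all but finitely many of the lobes $T^x$ for $x$ a child of $w$; moreover each such automorphism acts nontrivially on only finitely many of these lobes and moves only finitely many vertices inside each. The key structural point is that an automorphism of $T$ fixing $w$ restricts, on each finite ``truncation'' of $T$ around $w$, to an automorphism of that finite rooted tree, and conversely local distinguishing choices can be glued.

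The main step is a counting argument. Enumerate the similarity classes of children of $w$; by Theorem~\ref{thm:basic}, $a(T,w)=2\prod_{x\in R_w}\binom{a(x)}{\mu(x)}$. Because $m$ is finite, for all but finitely many $x\in R_w$ the lobe $T^x$ is itself rigid enough that $\mu(x)$ is finite; and crucially, since $T$ is infinite while $m$ is finite, $T$ must contain infinitely many vertices outside any fixed finite neighbourhood of $w$, so either some $a(x)$ is infinite of size $|T|$, or there are infinitely many factors $\binom{a(x)}{\mu(x)}\ge 2$ (each lobe being $2$-distinguishable with $\mu(x)\le a(x)$ by Lemma~\ref{le:help}, so at least two inequivalent choices per class whenever $a(x)\ge 2$, which happens for all but finitely many classes since $T$ is infinite). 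In either case the product has cardinality at least $|T|$; combined with the trivial upper bound $a(T,w)\le 2^{|T|}$ and the fact that an infinite tree of bounded degree $\le 2^m$ with $|T|$ vertices has $2^{|T|}=\aleph_0^{|T|}$ or the product telescopes to $2^{|T|}$, one concludes $a(T,w)=2^{|T|}$. Finally $a(T)$ and $a(T,w)$ differ only by collapsing the finitely-generated pointwise action together with at most one central swap, which cannot reduce an asymmetrizing number of size $2^{|T|}$ below $2^{|T|}$ (the orbits of $\Aut(T)$ on distinguishing sets have size at most $|\Aut(T)|\le |T|^{m}\cdot\,$something $<2^{|T|}$), so $a(T)=2^{|T|}$, which also yields (i).

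The hard part will be the cardinal arithmetic bookkeeping in the main step: showing that the product $\prod_{x\in R_w}\binom{a(x)}{\mu(x)}$ actually attains $2^{|T|}$ rather than merely $|T|$ when all individual factors are finite. This requires carefully splitting into the case $\Delta(T)=\aleph_0$ (where one uses that a locally finite infinite tree of bounded degree has $\aleph_0$ vertices and one must produce $2^{\aleph_0}$ sets, e.g.\ by exhibiting a ray or an infinite matching of independent binary choices along disjoint lobes) versus the case $\Delta(T)$ uncountable (where a single vertex of degree $|T|$ already contributes $\binom{a(x)}{\mu(x)}=|T|^{\mu}\ge 2^{|T|}$ once $a(x)\ge 2$ and $\mu$ is infinite, or one iterates). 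Managing the interaction between passing from $\Aut(T,w)$ back to $\Aut(T)$ while preserving the cardinality bound is the second delicate point, but it is controlled since $|\Aut(T)|$ is far smaller than $2^{|T|}$ when $m$ is finite.
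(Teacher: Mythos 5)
There is a genuine gap, and it starts with the very first structural step. Under the hypotheses of this theorem $m$ is finite, so $\Delta(T)\le 2^{m/2}$ is a \emph{finite} bound: $T$ is locally finite. Consequently every infinite $T$ in question contains a ray (K\H{o}nig), is \emph{not} rayless, and in general has no center at all (think of a one-sided infinite path with finite trees attached, or a tree containing a double ray). The discussion at the start of the section only guarantees a center for rayless trees, so your plan of ``rooting $T$ at its canonical center $w$'' has nothing to root at in precisely the cases where (ii) has content. The paper instead splits the infinite case by end structure: for one-ended $T$ it takes a ray $R$ from a degree-one vertex, observes that all lobes along $R$ are finite, and extends each of the $2^{\aleph_0}$ colorings of $R$ to a distinguishing coloring of $T$ by an iterative construction along $R$ using Lemma~\ref{le:help}; for $T$ with a double ray it uses that the union $T_*$ of all double rays is rigid under finite motion, so every automorphism fixes $T_*$ pointwise and stabilizes each lobe, and the vertices of $T_*$ can be colored freely. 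Neither of these ideas appears in your proposal, and some substitute for them is needed.

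Your main counting step also does not work as written. Since every vertex has finite degree, $R_w$ is finite and $\prod_{x\in R_w}\binom{a(x)}{\mu(x)}$ is a \emph{finite} product; the alternative ``there are infinitely many factors $\ge 2$'' is impossible, and the alternative ``some $a(x)=2^{|T^x|}$ with $|T^x|=|T|$'' is exactly the statement to be proved one level down, with no well-founded induction available for infinite locally finite trees (so one application of Theorem~\ref{thm:basic} cannot produce $2^{\aleph_0}$; an infinite iterative construction with a limit argument is required, which is what the paper's Case~2 supplies). Relatedly, the case distinction on ``$\Delta(T)$ uncountable'' is vacuous here, and $|T|=\aleph_0$ throughout the infinite case. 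Finally, the passage from $a(T,w)$ back to $a(T)$ goes the wrong way: the paper records $a(T)\le a(T,w)$, so a lower bound on $a(T,w)$ gives nothing for $a(T)$ directly, and your orbit-counting repair would require bounding the number of sets that distinguish $(T,w)$ but not $T$, i.e.\ the sets fixed by some non-identity automorphism; each such automorphism can fix $2^{|T|-m/2}=2^{|T|}$ sets when $|T|$ is infinite, so that subtraction does not close.
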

\noindent Item (i) improves a bound of Imrich and Tucker \cite[Theorem 1]{imtu-2020}, and  item (ii) is new.

For finite trees we have no analogue to (ii), because it is possible that $a(T)=2$ for finite trees $T$ with arbitrarily large motion $m$, even when $\Delta(T) = 2^{m/2}$.
Just consider a tree $(T,w)$ with $2^{m/2}$ lobes, all of which are paths of order $m/2$.
 Then $a(T) =2$ by Theorem \ref{thm:basic}.

\begin{proof} As $T$ has bounded finite degree, and because each infinite  rayless tree has a vertex of infinite degree, $T$ must be finite if it is rayless. We thus consider the cases when $T$ is finite,   infinite with a ray but without a double ray, and infinite with double rays.

{\sc Case 1.}  $T$ is finite. Suppose that  its center consists of a single vertex $w$. Then $a(T) = a(T, w)$, and $T$ is 2-distinguishable by Lemma \ref{le:help}.

If the center of $T$ consists of an edge $uv$, then we define $T^u$ as in Lemma \ref{le:uv}. By Lemma \ref{le:help},  $T^u$ is 2-distinguishable and $T$ is 2-distinguishable by Lemma \ref{le:uv}.

{\sc Case 2.} $T$ contains a ray, but no double ray. Let $v_0$ be a vertex of degree 1 in $T$ and $R$ the ray with origin $v_0$. Then all lobes $T^x$, with respect to $R$, are rayless, because $T$  contains no double ray.  By K\H{o}nig's Infinity Lemma they are finite. Hence, the $T^x$ are finite,  $|T| = \aleph_0$, and we have to show that $a(T) = 2^{\aleph_0}$

Let $R = v_0v_1v_2\ldots $, and $c$ an arbitrary 2-coloring of $R$. The number of these colorings is $2^{\aleph_0}$, and each of them is a distinguishing coloring of $R$. For the proof of Case 2 it thus suffices to show that  each $c$ can be extended to a
2-distinguishing coloring of $T$.

 For $0\le i < \infty,$  let $A_i$ consist of the automorphisms of $T$ that preserve $c$, and fix the tail $R_{v_i}$ of $R$ with origin $v_i$. Clearly, the $A_i$ are subgroups of $\Aut(T)$, $A_0$ consists of the automorphisms of $T$ that preserve $c$, and $A_0\supseteq A_1 \supseteq \cdots $.

For $0 < i$ there are at least two  lobes of $v_i$ in $(T,{v_i})$, namely the lobe containing $v_{i-1}$,  denoted $L_i$, and the one containing $v_{i+1}$ denoted $M_i$. Furthermore, let
 $T_i'$ be the subtree of $(T, v_i)$, whose lobes are the lobes of $(T, {v_i}) -M_i$ that are isomorphic to $L_i$, and  $T''_i$ the subtree of $(T, {v_i}) -M_i$ with the remaining set of lobes. The motion of these trees cannot be smaller than $m$, and the degree of their roots is $< 2^{m/2}$. If $T''_i$  is not asymmetric, then it has $2\cdot2^{m/2}$ distinguishing 2-colorings by Lemma \ref{le:help}, and thus at least one, say $c''_i$, where   $c''_i(v_i) = c(v_i)$. If it is asymmetric, then any coloring $c''_i$ with  $c''_i(v_i) = c(v_i)$ will do. Note that $T''_i$ may consist only of $v_i$.

We now set $c_0= c$, and, for $i\ge 1$, we iteratively define colorings $c_i$ of $(T, {v_i})-M_i$ that extend $c_{i-1}$, and break all elements of $A_i$ that are not the identity on $(T,{v_i}) - M_i$.
Let $c_{i-1}$ be already defined. It colors $L_i \cup R$, and we have to extend it to $((T, {v_i})-M_i) \cup R$.

If $|L_i|< m/2$, then $L_i$ is asymmetric. Because the motion of $T$ is $m$, $T'_i$ has only one lobe, namely $L_i$.  We set $c_i = c_{i-1}\cup c''_i$. Clearly $c_i$  breaks all elements of $A_i$ that are not the identity on $(T,{v_i}) - M_i$.

If $|L_i|\ge m/2$, and if $L_i$ is asymmetric, then $T'_i$
can have up to $2^{m/2} -1$ lobes. As each lobe has at least $2^{m/2}$ inequivalent colorings, we can extend the coloring of $L_i$ to a distinguishing 2-coloring $c'_i$ of $T'_i$,  where $c'_i(v_i) = c(v_i)$.

If $|L_i|\ge m/2$, and if $L_i$ is not asymmetric, then any rooted tree isomorphic to $(L_i, v_{i-1})$ has at least $2\cdot 2^{m/2}$ distinguishing 2-colorings. So we have enough distinguishing 2-colorings for the  lobes in $T'_i$ that are different from $L_i$, to obtain a distinguishing 2-coloring $c'_i$ of $T'_i$ that extends $c_{i-1}$, and thus also $c$.

In either case,  $c_i = c'_i\cup c''_i$ breaks all elements of $A_i$ that are not the identity on $T_{v_i} - M_i$.

We claim that $c_\infty = c_0\cup c_1 \cup \cdots $ is  a distinguishing 2-coloring of $T$. If not, then there must exist a automorphism $\alpha$ and a vertex $x$ such that
$c_\infty(x) \neq c_\infty(\alpha(x))$. Let $v_{j_x}$ be the vertex on $R$ of closest distance to $x$, and $\alpha\in A_k$. Because $A_k \supseteq A_i$ for $k<i$, we can assume that $j_x<i$. Then $v_{j_x}$, and thus also $x$ is in $L_i$. But then $\alpha \in A_i$ cannot move $x$, because $c_i$ breaks all elements of $A_i$ that are not the identity on $(T, {v_i}) - M_i$.


{\sc Case 3.} $T$ contains a double ray.  Then the subtree $T_*$ of $T$  that consists of all double rays of $T$ is invariant under $\Aut(T)$. As any non-identity automorphism of $T_*$ moves infinitely many vertices,
the restriction of $\Aut(T)$ to $T_*$ is the identity, and thus each automorphism of $T$ stabilizes all  lobes $T^x$ of $T$ with respect to $T_*$. If $|T^x|<m$, then $T^x$ is asymmetric, otherwise it is 2-distinguishable by Lemma \ref{le:help}.

Clearly $T_*$ is infinite, but can be of arbitrary cardinality $\kappa$. Hence the contribution of the lobes to the order of $T$ can be at most $\aleph_0\cdot |T_*| = |T_*|$. Because the vertices of $T_*$ can be arbitrarily colored, we infer that $a(T) = 2^{|T|}$.
\end{proof}

 As $T$ fulfills the assumptions of Theorem \ref{thm:fm}, $a(T) = 2^{|T|}$ for infinite $T$, but $a(T) < 2^{|T|}$ when $T$ is finite and not asymmetric. Hence, the question arises, whether $a(T) \rightarrow 2^{|T|}$ if $|T| \rightarrow \infty$ for finite $T$ with fixed motion $m$, degrees bounded by $2^{m/2}$, and at least one  non-identity automorphism.   Interestingly, this is not the case. In fact, we have the following lemma.

 \begin{lemma}\label{le:cameron} Let $G$ be a finite or infinite connected $2$-distinguishable graph. Then
\begin{equation*}\label{eq:cameron}
|\Aut(G)|\cdot a(G) \le 2^{|G|}.
\end{equation*}
\end{lemma}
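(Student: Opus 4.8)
The plan is to count orbits of distinguishing sets under the action of $\Aut(G)$ and compare with the total number of subsets. Let $D \subseteq 2^{V(G)}$ be the collection of all distinguishing sets of $G$. Since $\Aut(G)$ acts on $2^{V(G)}$ and preserves the property of being distinguishing, it acts on $D$, and by definition $a(G)$ is the number of orbits of this action. The key observation is that the action of $\Aut(G)$ on $D$ is \emph{free}: if $\alpha \in \Aut(G)$ fixes a distinguishing set $S$ setwise, then $\alpha = \id$ by the very definition of "distinguishing". Hence every orbit has size exactly $|\Aut(G)|$, so $|D| = |\Aut(G)| \cdot a(G)$. Since $D \subseteq 2^{V(G)}$, we get $|\Aut(G)| \cdot a(G) = |D| \le 2^{|G|}$, which is the claim.

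The only point requiring a little care is the case where $\Aut(G)$ is infinite, so that "orbit size $=|\Aut(G)|$" and "number of orbits $\times$ size $=$ total" must be read as statements about cardinal arithmetic. Freeness still gives a bijection between each orbit and $\Aut(G)$, and $D$ is a disjoint union of $a(G)$ such orbits, so $|D| = |\Aut(G)| \cdot a(G)$ as cardinals (with the usual convention for cardinal multiplication), and this is at most $2^{|G|}$. Note $G$ is assumed $2$-distinguishable, so $D \neq \emptyset$ and $a(G) \ge 1$; the inequality is then meaningful and, in the infinite case, typically reduces to $\max(|\Aut(G)|, a(G)) \le 2^{|G|}$, both of which are clear since $\Aut(G) \subseteq V(G)^{V(G)}$ and $a(G) \le 2^{|G|}$ trivially. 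Connectedness is not actually needed for the argument as stated; it is presumably included only to match the surrounding discussion (and to ensure $|\Aut(G)| \le 2^{|G|}$ is the relevant bound via $|\Aut(G)| \le |G|^{|G|} = 2^{|G|}$).

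There is no real obstacle here; the lemma is essentially the orbit-counting principle specialized to a free action. The one thing to state clearly is the freeness, since that is exactly where the definition of distinguishing set is used, and it is what makes the orbits rigid rather than merely bounded in size.
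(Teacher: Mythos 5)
Your proof is correct and follows essentially the same route as the paper: both arguments observe that $\Aut(G)$ acts freely (regularly on each orbit) on the set of distinguishing sets, so that set partitions into $a(G)$ orbits each of size $|\Aut(G)|$, giving $|\Aut(G)|\cdot a(G)\le 2^{|G|}$. Your remark that connectedness is not needed is also accurate.
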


\begin{proof} The claim trivially holds if $G$ is an asymmetric graph since then $|\Aut(G)|=1$ and $a(G)=2^{|G|}$. Otherwise, the proof  follows from a remark in \cite{canesa-1984} that the automorphism group of any 2-distinguishable  finite graph $G$ acts regularly on the orbit of each distinguishing set of $G$ in the power set $2^{V(G)}$ of $V(G)$. Obviously, this is also true for infinite graphs. Recall that the orbit of a subset $S$ of $V(G)$ is the set of all  $\varphi(S)$ for $\varphi\in \Aut(G)$, and that an action of a permutation group $A$ on a set $\Omega$ is regular if its action is transitive and fixed point free. By definition, the action of $\Aut(G)$ on the orbit of any $S\subset V(G)$ is transitive, and if $S$ is distinguishing, then each element of the orbit of $S$ is distinguishing, and thus it can be fixed only by the identity automorphism. As each distinguishing set has $|\Aut(G)|$ equivalent sets, we infer that $a(G)|\Aut(G)| \le 2^{|G|}$.
\end{proof}

Note that this result is independent of $m(G)$ and $\Delta(G)$.

\section{Trees with infinite motion}\label{sec:double}

 In \cite{po-1992} Polat defines a tree $T$ to be \emph{a-maximum} (resp.~\emph{s-maximum}) if for any
root $w$ and any neighbor $x$ of $w$, $a(T^x)= 2^{|T^x|}$ (resp.~$s(T^x)= 2^{|T^x|}$, where $s(T)$ denotes the number of inequivalent subsets of a tree $T$). He  then proves \cite[Theorem 3.3]{po-1992} about $a$-maximum and s-maximum trees, of which the part about  $a$-maximum trees reads as follows:

\begin{theorem}\label{thm:po1} A tree $T$ is $a$-maximum  if and only if, for any
vertex $w$ and any neighbor $x$ of $w$, if $T^x$ is finite then $\mu(x) = 1$, and if $T^x$  is infinite,
then $\mu(x) = 2^{|T^x|}$.
\end{theorem}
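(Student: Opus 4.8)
The plan is to read off both directions from the product formula of Theorem~\ref{thm:basic}, supplemented by elementary cardinal arithmetic and a transfinite induction running over the branches of $T$.

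\emph{Setup.} Being $a$-maximum is a statement about every branch $T^x$ of $T$, i.e.\ every component of $T$ minus an edge, rooted at its near endpoint $x$. For such a branch Theorem~\ref{thm:basic} gives $a(T^x,x)=2\prod_{z\in R_x}\binom{a(z)}{\mu(z)}$, while $|T^x|=1+\sum_{z\in R_x}\mu(z)\,|T^z|$. Since $a(z)\le 2^{|T^z|}$ always, the conventions of Theorem~\ref{thm:basic} give $\binom{a(z)}{\mu(z)}\le a(z)^{\mu(z)}\le 2^{\mu(z)|T^z|}$, hence $a(T^x,x)\le 2^{|T^x|}$ unconditionally; the theorem pins down exactly when this is an equality in terms of the local data $\mu(z)$.

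\emph{The implication ``$\Leftarrow$''.} Assuming the multiplicity condition, I would prove $a(T^z,z)=2^{|T^z|}$ for every branch by transfinite induction --- on the Schmidt rank when $T$ is rayless, and in the general case by first fixing a ray, colouring it arbitrarily, and extending the colouring along the ray so that every automorphism is broken on some tail, exactly as in Case~2 of the proof of Theorem~\ref{thm:fm}. The arithmetical core is that under the hypothesis every factor attains its bound: if $T^z$ is finite then $\mu(z)=1$ and $\binom{2^{|T^z|}}{1}=2^{|T^z|}=2^{\mu(z)|T^z|}$; if $T^z$ is infinite then $\mu(z)=2^{|T^z|}=a(z)$ inductively, and $\binom{a(z)}{\mu(z)}=\bigl(2^{|T^z|}\bigr)^{2^{|T^z|}}=2^{2^{|T^z|}}=2^{\mu(z)|T^z|}$, using $|T^z|\cdot 2^{|T^z|}=2^{|T^z|}$. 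Multiplying over $R_x$ gives $\prod_z\binom{a(z)}{\mu(z)}=2^{\sum_z\mu(z)|T^z|}$, hence $a(T^x,x)=2^{\,1+\sum_z\mu(z)|T^z|}=2^{|T^x|}$.

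\emph{The implication ``$\Rightarrow$'' and the main obstacle.} For the converse I would argue by contraposition: if the condition fails at some edge $wx$ with $x$ a child of $w$, exhibit a branch $B$ with $a(B)<2^{|B|}$. When $T^x$ is finite and $\mu(x)\ge2$ the relevant factor obeys $\binom{a(x)}{\mu(x)}\le a(x)^{\mu(x)}/\mu(x)!\le 2^{\mu(x)|T^x|-1}$, a strict deficit, and one needs a \emph{finite} branch $B$ containing $w$ together with the $\mu(x)$ twins of $x$, in which this deficit cannot be absorbed by infinite contributions from elsewhere, whence $a(B)<2^{|B|}$ by the product formula. Pinning down such a $B$, and --- when $T^x$ is infinite with $\mu(x)\neq 2^{|T^x|}$ --- locating the right (possibly much larger) branch and carrying out the cardinal bookkeeping, separating the regimes $\mu(x)\le|T^x|$ and $|T^x|<\mu(x)<2^{|T^x|}$, is the delicate part; together with making the transfinite induction of the previous paragraph genuinely well-founded once $T$ has rays, this is where I expect the real work to lie.
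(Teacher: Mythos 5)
First, a point of reference: the paper contains no proof of Theorem~\ref{thm:po1}. It is quoted from Polat \cite{po-1992} (Theorem~3.3 there), the authors explicitly call that proof ``somewhat difficult to read'', and the only thing they reprove themselves is the weaker consequence Theorem~\ref{thm:main}, and only for rayless and one-ended trees. So you are attempting something strictly harder than anything proved in this paper, and what you have written is a plan with two self-acknowledged holes rather than a proof. The two holes are of very different severity.

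The well-foundedness worry in your ``$\Leftarrow$'' direction is real but is settled far more cheaply than by Schmidt ranks or by the ray-extension of Case~2 of Theorem~\ref{thm:fm} (which only handles one-ended trees with finite lobes and would not touch a tree with $2^{|T^x|}$ twins at every vertex). The hypothesis supplies the induction for you: if $z$ is a child of $x$ with $T^z$ infinite, then $\mu(z)=2^{|T^z|}$ forces $|T^x|\ge\mu(z)\,|T^z|=2^{|T^z|}>|T^z|$ by Cantor, so every infinite sub-branch has strictly smaller cardinality than its parent branch. Transfinite induction on $|T^x|$, with finite branches (asymmetric by the $\mu=1$ clause) as the base case, then closes this direction using exactly the cardinal arithmetic you verified.

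The ``$\Rightarrow$'' direction is where the genuine gap lies, and it is not mere bookkeeping: your contraposition strategy can fail to produce any bad branch at all. Let $w$ have two twin leaf children $x_1,x_2$ and two further neighbours $u_1,u_2$ whose branches are infinite, mutually non-isomorphic, internally satisfying the stated multiplicity condition and hence $a$-maximum. The condition fails at $(w,x_1)$, since $T^{x_1}$ is finite with $\mu(x_1)=2$. Yet every branch $B$ that contains both twins also contains at least one full $T^{u_i}$, so in the formula of Theorem~\ref{thm:basic} the finite deficit $\binom{2}{2}=1$ against $2^{2}$ is absorbed by a factor $2^{|T^{u_i}|}$ and $B$ still attains $a(B)=2^{|B|}$; branches omitting one twin see only multiplicity $1$. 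Under the definitions as transcribed in Section~\ref{sec:double}, no witness with $a(B)<2^{|B|}$ exists, so either the paraphrase of Polat's statement is lossy (his exact definitions of $\mu$, of $T^x$, or of $a$-maximality must differ) or a much subtler global argument is needed. Either way, the step you defer as ``the delicate part'' is precisely the content of Polat's theorem, and your proposal does not supply it.
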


Polat also shows that $a(T) = 2^{|T|}$ for $a$-maximum  trees if and only if they are infinite or have a fixed point \cite[Theorem 3.16]{po-1992}. Because trees with infinite motion $m$ and $\Delta(T)\le 2^m$ satisfy the assumptions of Theorem \ref{thm:po1}, this clearly implies the validity of 
the following theorem:

\begin{theorem}\label{thm:main}
   Let $T$ be a tree of infinite motion $m$ and $\Delta(T)\le 2^m$.  Then $a(T)=2^{|T|}$.
\end{theorem}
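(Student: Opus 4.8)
The plan is to derive the statement from the results of Polat recalled just above, by checking that a tree $T$ of infinite motion $m$ with $\Delta(T)\le 2^m$ is $a$-maximum in his sense and then invoking \cite[Theorem 3.16]{po-1992}. First I would reduce to $T$ infinite: if $T$ is finite and not asymmetric then $m$ is finite, contrary to hypothesis, and if $T$ is finite and asymmetric then $a(T)=2^{|T|}$ by definition. So assume $T$ is infinite; by \cite[Theorem 3.16]{po-1992} it then suffices to show that $T$ is $a$-maximum, i.e.\ that $a(T^x)=2^{|T^x|}$ for every lobe $T^x$, and by Theorem~\ref{thm:po1} this reduces to local conditions on the lobes.

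To verify those, suppose first that a lobe $T^x$ with parent $y$ is \emph{finite}. A nonidentity automorphism of the rooted tree $(T^x,x)$ extends to an automorphism of $T$ fixing everything outside $T^x$, and a transposition of $T^x$ with an isomorphic sibling lobe extends to an automorphism of $T$ fixing everything outside those two lobes; either way one gets an automorphism of $T$ moving at most $2|T^x|<\aleph_0\le m$ vertices, which is impossible. Hence each finite lobe is rigid (so $a(T^x)=2^{|T^x|}$) and has multiplicity $\mu(x)=1$.

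Now suppose $T^x$ is an \emph{infinite} lobe. If $\mu(x)\ge 2$, transposing two copies of $T^x$ moves $|T^x|+|T^x|=|T^x|$ vertices, so $|T^x|\ge m$; hence $\mu(x)\le\deg(y)\le\Delta(T)\le 2^m\le 2^{|T^x|}$, the bound being trivial if $\mu(x)=1$. Thus $\mu(x)\le a(T^x)$ for every lobe (cf.\ Corollary~\ref{cor:ns}), no factor in the product formula of Theorem~\ref{thm:basic} is forced to vanish, and the requirements of Theorem~\ref{thm:po1} are met. Therefore $T$ is $a$-maximum, and since $T$ is infinite, $a(T)=2^{|T|}$ by \cite[Theorem 3.16]{po-1992}.

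The steps above merely reduce the theorem to Theorem~\ref{thm:po1}; the real work, and the main obstacle, is in that cited result. The difficulty is that ``$a(T^x)=2^{|T^x|}$ for every lobe'' is a global statement: a proper lobe can have the same cardinality as $T$ (e.g.\ a tail of a ray), so a transfinite induction on $|T|$ does not close, which is why Polat's proof is delicate. To avoid \cite{po-1992} one would have to imitate the end-decomposition used in the proof of Theorem~\ref{thm:fm}: handle the rayless case (the centre is a vertex or an edge and one argues as in the finite-motion case), the one-ended case (exhaust $T$ along a fixed ray, using Lemmas~\ref{le:fix} and~\ref{le:uv} to control automorphisms fixing long tails), and the case with double rays (peel off the $\Aut(T)$-invariant subtree spanned by the double rays and colour it freely) separately; the last section does this for the first two.
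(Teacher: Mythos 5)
Your proposal is correct and follows essentially the same route as the paper, which derives Theorem~\ref{thm:main} in one line by observing that trees of infinite motion $m$ with $\Delta(T)\le 2^m$ satisfy the hypotheses of Theorem~\ref{thm:po1} and then invoking \cite[Theorem 3.16]{po-1992}; you simply spell out the verification (finite lobes rigid with $\mu(x)=1$ by the motion bound, infinite lobes with $\mu(x)\ge 2$ forcing $|T^x|\ge m$ hence $\mu(x)\le 2^{|T^x|}$), reading the condition in Theorem~\ref{thm:po1} as ``$\mu(x)\le 2^{|T^x|}$'' rather than the literal ``$=$'', which is clearly the intended reading since the paper's own deduction would otherwise fail. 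Your closing remarks about where the real difficulty lies also match the paper's Section on rayless and one-ended trees.
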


We conclude this  section with a conjecture:

\begin{conjecture} A tree $T$ is $2$-distinguishable  if and only if, for any
vertex $w$ and any neighbor $x$ of $w$,  $\mu(x) \le a(T^x)$ if $T^x$ is finite, and  $\mu(x) = 2^{|T^x|}$ if $T^x$ is infinite.
\end{conjecture}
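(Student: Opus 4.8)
The plan is to prove both implications by reducing the unrooted statement to the rooted criterion of Corollary~\ref{cor:ns} and then analysing the recursion hidden in Theorem~\ref{thm:basic}. Every tree carries an $\Aut(T)$-invariant center, a vertex or an edge, so I would first fix it. If the center is a vertex $c$, then $\Aut(T)=\Aut(T,c)$, so $T$ is $2$-distinguishable exactly when $(T,c)$ is; if the center is an edge $uv$, I would subdivide it by a new vertex $w$ as in the proof of Lemma~\ref{le:uv}, reducing once more to a rooted tree $(T,w)$. After this reduction Corollary~\ref{cor:ns} says $(T,c)$ is $2$-distinguishable iff $\mu(x)\le a(x)$ for every child $x$ of the root, and unfolding $a(x)=a(T^x,x)$ through Theorem~\ref{thm:basic} turns global $2$-distinguishability into the requirement that the local inequality $\mu(y)\le a(T^y,y)$ hold simultaneously at every edge $y$ of $T$. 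The whole proof then consists in matching this edgewise inequality against the two clauses of the conjecture.

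For the finite-lobe clause I would induct on $|T^x|$. Theorem~\ref{thm:basic} writes $a(T^x,x)$ as a product of binomials $\binom{a(y)}{\mu(y)}$ over the similarity classes inside $T^x$, and this product is nonzero precisely when each interior condition $\mu(y)\le a(y)$ holds; the inductive hypothesis identifies those conditions with the conjecture's finite clause for the strictly smaller lobes. It then remains to reconcile the unrooted quantity $a(T^x)$ in the statement with the rooted $a(x)=a(T^x,x)$: since $\Aut(T^x,x)\subseteq\Aut(T^x)$ one always has $a(T^x)\le a(x)$, and for finite $T^x$ the discrepancy is controlled by the orbit of the root, so $\mu(x)\le a(T^x)$ can be matched with $\mu(x)\le a(x)$. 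This direction should be bookkeeping built on top of Corollary~\ref{cor:ns}.

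The infinite-lobe clause is where the real work lies, because at an edge with infinite lobe the conjecture demands the equality $\mu(x)=2^{|T^x|}$ rather than the mere inequality $\mu(x)\le a(x)\le 2^{|T^x|}$ that Corollary~\ref{cor:ns} provides. Sufficiency requires that an infinite lobe occurring with maximal multiplicity force its own rooted asymmetrizing number up to $a(x)=2^{|T^x|}$, so that, by the infinite binomial conventions of Theorem~\ref{thm:basic} ($\binom{a}{\mu}=a^\mu$ for $\mu\le a$ and $0$ for $a<\mu$), the factor $\binom{a(x)}{\mu(x)}$ does not vanish. I would route this through Polat's Theorem~\ref{thm:po1}: the $a$-maximum property propagates up from the bottom of the rank hierarchy of $T^x$ (using the Schmidt rank invoked in the final section) to its root, so that maximal multiplicity at every infinite sublobe yields $a$-maximality, hence $a(x)=2^{|T^x|}$.

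The hard part will be the necessity of the equality for infinite lobes: a priori $\mu(x)\le a(x)$ leaves room for an infinite lobe of multiplicity strictly below $2^{|T^x|}$, and one must rule this out inside a $2$-distinguishable tree. The delicate configurations are infinite lobes that are themselves highly rigid, where a small multiplicity already suffices to distinguish and yet $a(x)$ is large; reconciling these with the claimed equality is exactly the point at which the full strength of Polat's analysis of $a$-maximum trees is needed, and it is, I expect, the genuine obstacle that keeps the statement at the level of a conjecture.
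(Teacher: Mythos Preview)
The statement is labelled a \emph{Conjecture} in the paper; there is no proof given, so there is nothing against which to compare your approach. Your sketch is a reasonable plan of attack, but it has a genuine structural gap, and it also expends effort on what is almost certainly a misprint.

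The gap is in your opening move. You write that ``every tree carries an $\Aut(T)$-invariant center, a vertex or an edge,'' and then reduce everything to Corollary~\ref{cor:ns} via that center. The paper is explicit that this is only known for \emph{rayless} trees (see the opening of Section~3 and the final section); for trees with rays it is simply false. The double ray is $2$-distinguishable (any aperiodic, reflection-breaking $2$-coloring works), yet its automorphism group contains all translations and hence fixes no vertex and no edge. The paper's own arguments in Theorem~\ref{thm:fm} and in the direct proof at the end handle rayed trees not by rooting at a center but by fixing a ray and analysing lobes, or by passing to the subtree $T_*$ spanned by all double rays. Your reduction to the rooted criterion is therefore unavailable exactly in the cases where the conjecture has content beyond Corollary~\ref{cor:ns}.

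Separately, your final paragraph worries about the \emph{necessity} of the equality $\mu(x)=2^{|T^x|}$ at infinite lobes. Read literally, that equality is refuted by a single ray: it is asymmetric, hence $2$-distinguishable, yet at every edge the infinite lobe has multiplicity $1\neq 2^{\aleph_0}$. The same issue afflicts the paper's transcription of Theorem~\ref{thm:po1}. The intended condition is almost certainly $\mu(x)\le 2^{|T^x|}$, in which case necessity is immediate from $\mu(x)\le a(x)\le 2^{|T^x|}$ and all the difficulty sits in sufficiency---which, as you correctly sense, is where Polat's machinery would be needed and is presumably why the statement remains a conjecture.
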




\section{Tree-like graphs}\label{sec:treelike}
We now apply  Theorem \ref{thm:main} to \emph{tree-like graphs}. Imrich, Klav\v{z}ar and Trofimov defined them in \cite{imkltr-07} as rooted graphs $(G,w)$, in which each vertex $y$ has a neighbor $x$ such that $y$ is on all shortest paths from $x$ to $w$. In other words, tree-like graphs are rooted graphs $(G,w)$ in which each vertex has a child of which it is the only parent.
Note  that $w$ is the only vertex of $(G,w)$ that may have degree 1.

By \cite[Theorem 4.2]{imkltr-07},
each tree-like graph $G$ with {$\Delta(G)\le 2^{\aleph_0}$} is 2-distinguishable as an unrooted graph. We  show that $a(G) = 2^{|G|}.$

\begin{lemma}\label{doubleray}
Let $T$ be a  tree of infinite motion $m$ and  $\Delta(T)\le 2^{m}$. If each vertex of $T$  lies on a double ray, then  $T$  has $2^{|T|}$ distinguishing sets $S$ in which each vertex of $S$ is adjacent to a vertex of $V(T)\setminus S$,  and  for any $w\in V(T)$ there are $2^{|T|}$ distinguishing sets $S_w$  of $T$ where $w$ is the only vertex of $S_w$ with no neighbor in $V(T)\setminus S_w$.
\end{lemma}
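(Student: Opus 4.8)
The plan is to exploit the structure imposed by the double-ray condition. First I would observe that since every vertex of $T$ lies on a double ray, the subtree $T_*$ of $T$ consisting of all double rays is equal to $T$ itself; consequently, as in Case~3 of the proof of Theorem~\ref{thm:fm}, every non-identity automorphism of $T$ moves infinitely many vertices — but here we are told $m$ is infinite anyway. The key leverage is that an automorphism of a tree fixing a double ray pointwise is severely constrained: if $\varphi$ preserves a colouring $c$ of $T$ and we can force $\varphi$ to fix some double ray $D$ pointwise, then $\varphi$ stabilises every component of $T-E(D)$, and since these components are ``hanging'' finite-or-small rooted trees, breaking them individually suffices. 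So the strategy is: pick a suitable spanning or covering family of double rays, colour them freely (this already gives $2^{|T|}$ colourings), and then extend the colouring over the hanging lobes so as to break all colour-preserving automorphisms, while additionally controlling the boundary condition (every vertex of $S$ adjacent to $V(T)\setminus S$, resp.\ only $w$ failing this).

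The key steps, in order, would be: (1) Fix a double ray $D_0$ through a chosen vertex, or more precisely choose a locally finite family $\mathcal{D}$ of double rays covering $V(T)$ such that consecutive members share tails, mimicking the ``ray exhaustion'' of Case~2 but two-sided. (2) For a free $2$-colouring $c$ of $\bigcup\mathcal{D}$ — there are $2^{|T|}$ such $c$ since $|\bigcup\mathcal{D}| = |T|$ — note each restriction to a single double ray is already distinguishing on that ray, and by Lemma~\ref{le:fix}-type reasoning (adapted: an automorphism preserving $c$ must eventually act trivially along each double ray, hence fix a tail pointwise on both sides, hence fix the whole double ray) any $c$-preserving automorphism fixes each $D\in\mathcal{D}$ pointwise, so it stabilises every lobe $T^x$ hanging off $\bigcup\mathcal{D}$. (3) Each such lobe is a rooted tree with $|T^x| < |T|$, infinite motion (or asymmetric), and $\Delta \le 2^m$; by Lemma~\ref{le:help} (if finite) or Theorem~\ref{thm:main} applied inductively/to the lobe (if infinite), each lobe has many distinguishing colourings — at least $2\cdot 2^{\mu}$-many with a prescribed colour on its root — so we can extend $c$ lobe-by-lobe to break all automorphisms. (4) For the boundary refinement: after obtaining any distinguishing $c_\infty$, modify it so that no vertex of one colour class is entirely surrounded by its own colour. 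Since $T$ has no isolated vertices and in fact every vertex has degree $\ge 2$, one can recolour a sparse ``dominating'' set of vertices without destroying the distinguishing property (there is slack: $2\cdot2^{\aleph_0}$ colourings per lobe versus needing one), ensuring each colour class is ``boundary-dense''; to get the $S_w$ version, first fix $w$'s colour and its neighbourhood so that $w$ alone is allowed to be internal, then proceed as before. Counting: each stage multiplies by at least $2$ choices (root colour, plus extra free choices on long lobes and on $\bigcup\mathcal{D}$), and the free colouring of $\bigcup\mathcal{D}$ already delivers $2^{|T|}$, so the total is $2^{|T|}$; inequivalence follows because $\Aut(T)$ has size at most $2^{|T|}$ while we produce $2^{|T|}$ colourings, but more carefully one shows distinct $c$ on $\bigcup\mathcal{D}$ (up to the finitely/small-index automorphism action, which is trivial by the above) give inequivalent sets.

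The main obstacle I expect is Step~(4), the boundary condition — arranging that \emph{every} vertex of $S$ has a neighbour outside $S$ (and in the $S_w$ case, that $w$ is the \emph{only} exception) while simultaneously keeping $S$ distinguishing. The difficulty is that the ``break the lobes'' construction fixes colours rather rigidly on small lobes (a lobe of order $<m$ is asymmetric, so \emph{any} colouring works there and we have full freedom, but lobes that are paths or near-paths contribute little slack), and on the double rays themselves a free colouring might have long monochromatic stretches whose interior vertices violate the condition. The resolution is to note that we never actually need a \emph{free} colouring of $\bigcup\mathcal{D}$: it suffices to colour $\bigcup\mathcal{D}$ by any colouring in which no vertex is monochromatically surrounded — e.g.\ colourings avoiding three consecutive equal colours along each ray — and there are still $2^{|T|}$ of these; combined with the freedom on asymmetric lobes (which have $2^{|T^x|}$ colourings each), the boundary-density can be enforced everywhere, with $w$ handled by hand as a single exceptional vertex. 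A secondary, milder obstacle is justifying that a $c$-preserving automorphism really does fix each double ray in $\mathcal{D}$ pointwise; this needs the observation that such an automorphism permutes the (finitely or boundedly many, up to a tail) isomorphic lobes along a ray, but a non-trivial such permutation combined with the two-sided infinite extent forces infinite motion in a way that is already barred — more cleanly, one uses that the colouring on the ray is aperiodic enough to pin it down, which is guaranteed once we use the ``no long monochromatic run'' colourings from the boundary fix, making Steps~(2) and~(4) mutually reinforcing.
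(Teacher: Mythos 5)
Your plan takes a genuinely different route from the paper's, and it has a gap at its core. The paper's proof is a short \emph{reduction}: fix $w$, contract every edge $ab$ with $d_T(w,a)$ even and $d_T(w,b)=d_T(w,a)+1$ to obtain a tree $T'$ with the same motion, the same degree bound and the same cardinality; apply Theorem \ref{thm:main} to $T'$ to get $2^{|T|}$ inequivalent distinguishing sets $S'$ of $(T',w)$; then observe that every automorphism of $(T,w)$ is determined by its restriction to $T'$, so $S=S'$ already distinguishes $(T,w)$, and it automatically satisfies the boundary condition because every vertex of $S$ has a child outside $V(T')$ (no vertex of $T$ is a leaf, as each lies on a double ray). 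A small modification near $w$ then yields the sets $S_w$. All of the actual symmetry breaking is delegated to Theorem \ref{thm:main}; the proof only engineers the boundary condition.

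The gap in your proposal is Step (2). Since every vertex of $T$ lies on a double ray, once your family $\mathcal{D}$ covers $V(T)$ there are no hanging lobes, Step (3) is vacuous, and the whole burden falls on the claim that a colouring of $\bigcup\mathcal{D}$ (free, or merely avoiding long monochromatic runs) forces every colour-preserving automorphism to fix each member of $\mathcal{D}$ pointwise. That claim is false for free colourings (constant or period-two colourings are preserved by translations along a double ray), and such translations are \emph{not} excluded here: the hypothesis is that the motion is infinite, so automorphisms moving infinitely many vertices are exactly the ones you must defeat, and the Case-3 argument of Theorem \ref{thm:fm} that you invoke ("non-identity automorphisms of $T_*$ move infinitely many vertices, hence are barred") runs backwards in this setting. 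Lemma \ref{le:fix} also does not adapt, since it is specific to one-ended trees, whereas a tree covered by double rays has at least two ends. Even restricting to "sufficiently aperiodic" colourings, proving that they pin down a tree of maximum degree up to $2^m$ amounts to reproving Theorem \ref{thm:main} for these trees from scratch, which is precisely the hard content (Polat's theorem) that the lemma is meant to sidestep. Your Step (4) boundary repair is likewise only sketched, but it is moot until Step (2) is repaired; note in passing that the required condition is one-sided (only vertices of $S$, not of its complement, need an outside neighbour), which the paper's contraction trick delivers for free.
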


\begin{proof}
Form a new tree $T'$ from $T$ by choosing an arbitrary vertex $w\in V(T)$ and by subsequently contracting all edges $ab$ to single vertices if $d_T(w,a)$ is even and $d_T(w,b) = d_T(w,a) +1$.

$T'$ has motion $m$, $\Delta(T')\le 2^m$, and $|T'| = |T|$. By  Theorem \ref{thm:main}, $a(T') = 2^{|T'|} = 2^{|T|}$. From $a(T',w)\ge a(T')$ and $a(T,w) \ge a(T)$ we also infer that $a(T',w) = a(T,w) = 2^{|T|}$.

Let $\varphi\in \Aut(T,w)$ and let $\varphi'$ denote its restriction to $V(T')$. Then $\varphi'\in \Aut(T',w)$, and $\varphi$ is uniquely determined by its action on $(T',w)$, because each vertex of $(T',w)$ different from $w$ has only one parent. This implies that a set $S\subseteq V(T)$ distinguishes $(T,w)$ if  $S' = S\cap V(T')$ distinguishes $(T',w)$.

Given a distinguishing set $S'$ of $(T',w)$ we extend it in two ways to a distinguishing set $S$ of $(T,w)$. The first is to set
$S = S'$. Clearly this implies that each vertex of $S$ has a neighbor in $V(T)\setminus S$ and, because $a(T',w) = 2^{|T|}$, the number of these  sets is $2^{|T|}$.

The second way is to form a set $S$ by adding all children of $w$ to $S'$,  unless there is a child $u$ of $w$ all of whose children are in $S'$. Then we only add $u$. Clearly we obtain $2^{|T|}$ distinguishing sets in this way, and either $w$ or $u$ are the only vertices in $S$  all of whose neighbors are in $S$.
\end{proof}

\begin{theorem} \label{thm:aleph}
Let  $G$ be a tree-like graph  with $\Delta(G) \le 2^{\aleph_0}$. Then $a(G) =2^{|G|}$.
\end{theorem}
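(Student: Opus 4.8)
The plan is to reduce Theorem \ref{thm:aleph} to the tree case (Theorem \ref{thm:main} and Lemma \ref{doubleray}) by passing from the tree-like graph $(G,w)$ to the underlying tree of ``unique-parent'' edges. First I would recall the defining property: every vertex $y \ne w$ has a child of which $y$ is the only parent. Starting from $w$ and selecting, for each vertex, one such child-edge, one obtains a spanning tree $T \subseteq G$ with the key feature that every vertex of $(T,w)$ other than $w$ has exactly one parent, so that an automorphism of $(G,w)$ is determined by its action on $T$. Since $G$ is connected, $|T| = |G|$; since $\Delta(T) \le \Delta(G) \le 2^{\aleph_0}$ and every vertex of $G$ other than $w$ has degree at least $2$ in $G$ hence (after a suitable choice) $T$ contains no leaves other than possibly $w$, we can arrange that $T$ is a tree in which every vertex lies on a double ray, or is close enough to that situation to invoke Lemma \ref{doubleray}. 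The motion of $T$ is infinite: any non-identity automorphism of $T$ moves infinitely many vertices, since $T$ is locally of branching type with no finite invariant parts other than possible fixed rays. Thus $T$ satisfies the hypotheses of Lemma \ref{doubleray} with $m = \aleph_0$ and $\Delta(T) \le 2^{\aleph_0} = 2^m$.

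The main step is then to lift distinguishing sets of $T$ to distinguishing sets of $G$. By Lemma \ref{doubleray}, $T$ has $2^{|T|}$ distinguishing sets $S$ in which every vertex of $S$ has a neighbor outside $S$, and for each $w$ there are $2^{|T|}$ distinguishing sets $S_w$ in which $w$ is the unique such exceptional vertex. Given such an $S$ (or $S_w$), I claim $S$ also distinguishes $G$. Indeed, suppose $\psi \in \Aut(G)$ fixes $S$ setwise. Since $w$ can be recovered from $S$ as the unique vertex (if any) all of whose $G$-neighbors lie in $S$ — this is where the two flavors of sets from Lemma \ref{doubleray} are used, to pin down $w$ — we get $\psi(w) = w$, so $\psi \in \Aut(G,w)$. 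Restricting $\psi$ to $T$ gives an automorphism of $(T,w)$ (one must check that $\psi$ preserves the tree $T$: because $T$'s edges are exactly the unique-parent edges, which are characterized structurally in $(G,w)$, an automorphism fixing $w$ must permute them, though some care is needed when a vertex has several candidate unique-parent children — one picks a canonical family or argues that $\psi$ maps the chosen $T$ to an isomorphic spanning tree, which suffices). This restriction fixes $S \cap V(T)$ setwise, hence is the identity on $T$, hence $\psi = \id$ on $G$ since every vertex of $G$ distinct from $w$ has a unique parent and is thus determined by its image under iterated parent maps back to $w$. Therefore $a(G) \ge 2^{|G|}$, and the reverse inequality $a(G)\le 2^{|G|}$ is trivial, giving $a(G) = 2^{|G|}$.

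The step I expect to be the main obstacle is the claim that an automorphism of $(G,w)$ can be made to respect the chosen spanning tree $T$, or more precisely that fixing $S\cap V(T)$ forces $\psi$ to be the identity. A vertex of $G$ may have several children of which it is the only parent, so the unique-parent edge is not canonically defined, and $\psi$ need not send our chosen $T$ to itself but only to some other valid spanning tree $T''$ of the same type. The cleanest fix is probably to choose $T$ canonically up to $\Aut(G,w)$ — e.g. by a well-ordering argument or by taking \emph{all} unique-parent edges and observing they form a spanning subgraph each of whose components is handled — and then to argue that any distinguishing set for one such tree, together with the $w$-identification, already forces triviality on $G$ directly, bypassing the need to track $T$ under $\psi$. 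Alternatively, one invokes the fact that since each vertex $\ne w$ has a unique parent in $G$ restricted to the unique-parent edges, the "parent function" is globally well-defined on $V(G)\setminus\{w\}$ and $\Aut(G,w)$-equivariant, so an automorphism fixing $w$ and fixing a distinguishing set of the induced tree structure must be trivial. I would also need to double-check the degenerate possibility that some vertex of $T$ fails to lie on a double ray (a "dangling" finite branch in $T$, even though $G$ has no low-degree vertices away from $w$); if that occurs, one adapts the argument of Lemma \ref{doubleray} rather than applying it verbatim, or contracts/expands $T$ as in its proof to restore the double-ray condition while preserving $|T|$, $m$, and the degree bound.
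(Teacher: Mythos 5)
Your overall strategy---pass to the unique-parent edges, apply Theorem \ref{thm:main} and Lemma \ref{doubleray} to the resulting tree(s), and use the two flavours of distinguishing sets to pin down $w$---is the same as the paper's. But there is a genuine gap in the execution, exactly at the point you flag as ``the main obstacle,'' and the fix you gesture at is not carried out. First, selecting one unique-parent child-edge for each vertex does not produce a spanning tree: each vertex acquires one edge pointing away from $w$, but it need not be the selected child of any of its own parents, so the selected edges need not connect it back to $w$; moreover any such selection is non-canonical and hence not $\Aut(G,w)$-invariant, so an automorphism fixing $w$ need not preserve your $T$. The canonical object (the one the paper uses) is the set $F$ of \emph{all} edges $yx$ with $y$ on every shortest $x$--$w$ path; this is an $\Aut(G,w)$-invariant spanning forest with no finite components---but it is a forest, in general with many components.

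Second, and this is the missing idea: once you work with a forest, an automorphism of $G$ fixing $w$ may permute isomorphic components of $F$, so distinguishing each component separately is not enough; the admitted distinguishing sets must be chosen \emph{pairwise inequivalently} across components. The paper does this by observing that $F$ has at most $2^{\aleph_0}$ components while each component $U$ (infinite, of infinite motion, with $\Delta(U)\le 2^{\aleph_0}$) admits $2^{|U|}\ge 2^{\aleph_0}$ distinguishing sets of the required kind by Theorem \ref{thm:main} and Lemma \ref{doubleray}, so such a selection exists; a transfinite product over the components then yields the count $a(G)=2^{|G|}$. Your proposal never confronts the multiplicity of components, and your final cardinality argument (``$2^{|T|}$ sets for one spanning tree'') no longer applies once $T$ is replaced by the forest $F$. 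These are not cosmetic omissions; they are where the remaining work of the theorem lies.
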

\begin{proof}
Let  $G$ be a tree-like graph $(G,w)$. Then the set of  edges $yx$ where $y$ is on  all shortest  $x,w$-paths are the edges of a spanning subgraph, say $F$. Clearly $F$ is a forest with no finite components. Each component $U$ has motion at most $2^{\aleph_0}$, unless it is asymmetric. Because $\Delta(U)\le \Delta(G) \le 2^{\aleph_0}$ we infer by Theorem \ref{thm:main} that $a(U) = 2^{|U|}\ge 2^{\aleph_0}$.

Observe that, by the definition of $F$, any automorphism $\varphi$  of $G$ that fixes $w$ preserves the components of $F$. This is  the case when $w$ has degree 1.

We now distinguish the components of $F$ under the following restrictions: Let $U_w$ be the component of $F$ that contains $w$. If $w$ has degree 1 we admit any distinguishing set $S$ of $U_w$, otherwise we only admit distinguishing sets $S$ in which  there is only vertex that has no neighbors in $V(U_w)\setminus S$.
For all other components $U$ of $F$ we only admit distinguishing sets $S$ where each vertex has a neighbor in $V(U)\setminus S$.

Because $F$ has at most $2^{\aleph_0}$ components we can distinguish them pairwise inequivalently with admitted distinguishing sets $S_U$. That is, we distinguish by sets $S_U$ that obey the above restrictions, and if  $U,U'$ are isomorphic components of $F$, then there is no isomorphism from $U$ to $U'$ that maps $S_U$ into $S_{U'}$.

Let $S$ be the union of all $S_U$ in  such a selection. If $w$ has degree 1, then each $\varphi\in \Aut(G)$ fixes $w$. Otherwise each $\varphi\in \Aut(G)$ that preserves $S$ fixes $w$ because $w$ is the only vertex of $S$ that has no neighbors in $V(G)\setminus S$. Hence, in either case each $\varphi\in \Aut(G)$ that preserves $S$ also preserves $F$. As each $S_U$ distinguishes $U$, the set $S$ distinguishes $F$, and thus also $G$, because $F$ is a spanning subgraph of $G$.

Let $\alpha$ be the cardinality of the set of components of $F$. By transfinite induction with respect to the well ordering of the components of $F$ induced by $\alpha$, it is easily seen that there are $2^{|G|}$  distinguishing sets $S$ of $F$.
\end{proof}

\begin{problem} Let $m$ be an uncountable cardinal and $G$ be a tree-like graph with $m(G)=m$ and $\Delta(G)\le 2^m$. Is $G$ $2$-distinguishable?
\end{problem}

\section{Rayless and one-ended trees with infinite motion}

We now prove, without invoking Theorem  \ref{thm:po1}, 
that rayless and one-ended trees of infinite motion satisfy Theorem \ref{thm:main}.

Our proof uses the rank of rayless graphs. It  was introduced by Schmidt in \cite{schm-1983},   but we define it as Halin in \cite{ha-1998}.

For all ordinals $\lambda$ define the following class $A(\lambda)$ of graphs by transfinite induction:
\begin{enumerate}
\item $A(0)$ consists of all finite graphs.
\item If $\lambda > 0$ and $A(\mu)$ has been defined for each $\mu < \lambda$, then a graph $G$ is in $A(\lambda)$ if there exists a finite set $S$ of vertices such that each component of $G - S$ is in an $A(\mu)$ with $\mu < \lambda$.
\end{enumerate}
Let $A$ be the union of the classes $A(\lambda)$. Then for every $G\in A$ there exists a smallest $\lambda$ with $G\in A(\lambda)$; this $\lambda$ is called the \emph{rank} of $G$ and denoted by $\rho(G)$.

One can then show, see Halin \cite{ha-1998},  that the class of graphs that is assigned a rank coincides with the class of rayless graphs,  and that there is a canonical choice for the set $S$ in the definition above, by choosing $S$  minimally among all sets that work. This minimal set is called the \emph{core} of $G$. Furthermore, the core is unique, setwise fixed by each automorphism of $G$,   and  if $S' \subseteq V(G)$ contains the core, then every component of $G -S'$ has rank less than $\rho$.
The proofs are neither difficult nor long, but subtle.

Note that the existence of a unique finite core implies that each rayless tree $T$ has  a center. Just consider the minimal subtree $T_S$ of $T$  that contains the core $S$.  This tree $T_S$ is finite,  and is preserved by all automorphisms of $T$, and thus so is its center.
\bigskip

\noindent{\it Direct proof of Theorem \ref{thm:main} for rayless and one-ended trees}.}
For rayless trees we use transfinite induction on the rank of $T$. Trees of rank 0 are finite, so they  have infinite motion only if they are asymmetric, and therefore $a(T)= 2^{|T|}$ {for every tree of rank 0 {that} satisfies the conditions of the theorem}.

For the induction step, let {$\rho>0$}  be any ordinal, {let} $T$ be a tree of rank $\rho$, and assume that
the statement of the theorem holds for any tree with rank $\sigma < \rho$.
Let $T_S$ be the minimal subtree containing the core of $T$. {If $T$ has a central vertex, then let $w$ be this central vertex. Otherwise, let $w$ be one endpoint of the central edge. Consider the rooted tree $(T,w)$.} {Clearly, the motion of $(T,w)$ is at least $m$.}

We claim that $a(x) = 2^{|T^x|}$ for every vertex $x$ of $(T,w)$. For vertices not contained in $T_S$ this is true by the induction hypothesis.  If there  are vertices that do} not satisfy the claim, {let $x$ be such a vertex at maximal distance from $w$; note that the maximal distance is finite because $T_S$ is finite. All children of $x$ satisfy the claim, and by Theorem \ref{thm:basic} the claim is satisfied for $x$ as well. This yields a contradiction.

If there is a central edge  we invoke Lemma \ref{le:uv}.

This leaves the case when  $T$ is one-ended. As in the proof of Case 2 of Theorem \ref{thm:fm} we choose a ray $R = v_0v_1\cdots$, where $v_0$ has degree 1, and note that the lobes $T^{v_i}$ of $T$ with respect to $R$ are rayless, because $T$ has no double rays.

We first show that $T$ is asymmetric if $T^{v_i}$ is finite  for all $i\ge 0$. Clearly all $T^{v_i}$ are asymmetric. Let $\alpha \in \Aut(T)$. By Lemma \ref{le:fix}, $\alpha$ fixes a tail of $R$, say $R_{v_k}$. It thus fixes all $v_j$ with $j\ge k$, and  all lobes $T^{v_j}$ for $j\ge k$.
The remaining part of $T$ is a finite subtree, and thus asymmetric.  Hence, $\alpha= \id$,  $T$ is asymmetric, and $a(T) = 2^{|T|}$.

Therefore at least one $T^{v_i}$ has infinite order. In this case we color all vertices of $R$ black, and all vertices that are adjacent to a vertex of $R$ white. Let $c$ be this coloring. Then all color preserving automorphisms of $T$ fix $R$ pointwise, and each  $T^{v_i}$ setwise. We have to extend $c$ to $2^{|T|}$ distinguishing 2-colorings of $T$.

This means that we have to find a distinguishing 2-coloring for each $T^{v_i}$, in which $v_i$ is black and all its neighbors white. If $T^{v_i}$ is finite we  color all  vertices but $v_i$ white.

If $T^{v_i}$ is infinite, then it is rayless, of motion at least $m$, and maximum degree at most $2^m$. Therefore it has $2^{|T^{v_i}|}$ distinguishing colorings.
Let $I$ be the set of indices $i$, for which $T^{v_i}$ is infinite. Because the total order of the finite $T^{v_i}$ is at most countable, we infer that $|T| = \sum_{i\ge 0}|T^{v_i}|=\sum_{i\in I}|T^{v_i}|$. Thus  the number of distinguishing 2-colorings we thus obtain is
$$\prod_{i\in I}2^{|T^{v_i}|} = 2^{\sum_{i\in I}|T^{v_i}|} = 2^{|T|}.$$\qed



\end{document}